\numberwithin{equation}{section}
\newtheorem{theo}[equation]{Theorem}
\newtheorem{rem}[equation]{Remark}
\newtheorem{defin}[equation]{Definition}
\newtheorem{prop}[equation]{Proposition}
\newtheorem{lema}[equation]{Lemma}
\begin{document}
\date{October 1, 2019}
\title{{Polytopal Bier spheres and Kantorovich-Rubinstein polytopes of weighted cycles}}

\author{{Filip D. Jevti\'{c}} \\{\small Mathematical Institute}\\[-2mm] {\small SASA,   Belgrade}
\and Marinko Timotijevi\'{c}\\ {\small Faculty of Science}\\[-2mm] {\small University of Kragujevac}
\and Rade T. \v Zivaljevi\' c\\ {\small Mathematical Institute}\\[-2mm] {\small SASA,   Belgrade}}

\maketitle
\begin{abstract}\noindent
The problem of deciding if a given triangulation of a sphere can be realized as the boundary sphere
of a simplicial, convex polytope is known as the `Simplicial Steinitz problem'. It is known by an indirect and non-constructive
argument that a vast majority of Bier spheres are non-polytopal. Contrary to that,
we demonstrate that the Bier spheres associated to threshold simplicial complexes are all polytopal. Moreover, we
show that all Bier spheres are starshaped. We also establish a connection between Bier spheres and
Kantorovich-Rubinstein polytopes by showing that the boundary sphere of the KR-polytope associated to a polygonal linkage
(weighted cycle) is isomorphic to the Bier sphere of the associated simplicial complex of {``short sets''}.
\end{abstract}

\medskip
\noindent
Keywords: Kantorovich-Rubinstein polytopes, Gale transform, Bier spheres, polyhedral combinatorics,
                      simplicial Steinitz problem,  polygonal linkages

\noindent
MSC2010: 52B12, 52B35, 52B70  	

\renewcommand{\thefootnote}{}
\footnotetext{This research was supported by the Grants 174020 and
174034 of the Ministry of Education, Science and Technological Development of the Republic of Serbia.}

\section{Introduction}
\label{sec:intro}

The classic theory of the optimal transportation, as developed by L.~Kantorovich \cite{k42, Vil, Vil2}, is one of the pillars
of the theory of linear programming \cite{v13, Vil}. The central paradigm of the theory is the  Kantorovich duality principle \cite{Vil}, in its manifold forms and incarnations.
It includes, as one of the central consequences, the Kantorovich-Rubinstein theorem \cite[Theorem 1.14]{Vil}, which pertains to the case when the cost function is a metric.

\medskip
 Much more recent is the research program, proposed by A.~Vershik in \cite{v15}, of studying ``fundamental polytopes'' or Kantorovich-Rubinstein polytopes
 as a tool for classifying metric spaces from the view point of polyhedral combinatorics (see Section~\ref{sec:K-R}
for an outline).  These ideas can be traced back to Vershik's earlier publications
\cite{v13, v14}, especially \cite{mpv} (with J.~Melleray and F.~Petrov) and to Kantorovich himself, see \cite{kr} where the Kantorovich-Rubinstein norm $\|\mu - \nu  \|_{KR}$ is introduced.


\medskip
Bier spheres $Bier(K)$, where $K\subsetneq 2^{[n]}$ is an abstract simplicial complex, are combinatorially defined triangulations of
the $(n-2)$-dimensional sphere $S^{n-2}$ with interesting combinatorial and topological properties, \cite{long-1, M}. These spheres are known to be shellable \cite{bpsz, cukic}.
Moreover it is known, by an indirect and non-effective counting argument, that the majority of these spheres are non-polytopal,
in the sense that they do not admit a convex polytope realization, see \cite[Section 5.6]{M}. They also provide one of the most elegant proofs of the Van Kampen-Flores theorem \cite{M}
and serve as one of the main examples of ``Alexander complexes'' \cite{jnpz}.

\medskip
Threshold complexes are ubiquitous in mathematics and arise, often in disguise and under different names,   in areas as different as cooperative game theory (quota complexes and simple games) and algebraic topology
of configuration spaces (polygonal linkages, complexes of short sets)   \cite{CoDe, far, ga-pa}.

\medskip
The following theorem  establishes a connection between the boundary $\partial KR(d_L)$ of the Kantorovich-Rubinstein polytope of
a weighted cycle, and the Bier sphere of the threshold complex of ``short sets'' of the associated polygonal linkage.

\begin{theo}\label{thm:KR-Bier}
Suppose that $L = (l_1,l_2,\dots, l_n)\in \mathbb{R}^n_+$
is a strictly positive vector such that $\sum_{i=1}^n~l_i =1$. Let  $\mu_L$ be the associated measure (weight distribution) on $[n]$
and  let ${\rm Short}(L) = T_{\mu_L < 1/2} := \{I\subseteq [n] \mid \mu_L(I) < \frac{1}{2}\}$ be the associated simplicial complex of ``short sets''. We assume that
$L$ is generic in the sense that $(\forall I\subset [n])~\mu_L(I)\neq \frac{1}{2}$. Let $\hat{L}$
be a weighted cycle (linkage)  with bars of the length $l_i$ and let $d_L$ be the associated geodesic distance function on $[n]$. Then
\begin{equation}\label{eqn:geodesic}
\partial KR(d_L) \cong Bier({\rm Short}(L))  \, .
\end{equation}
\end{theo}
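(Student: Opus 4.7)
My plan is to compute the face lattice of $\partial KR(d_L)$ explicitly via Kantorovich--Rubinstein duality and identify it term-by-term with that of $\mathrm{Bier}(\mathrm{Short}(L))$.

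\textbf{Candidate vertices.} For any pair $a\ne b$ on the cycle, telescoping along the shorter arc from $a$ to $b$ yields
\[
\frac{e_a-e_b}{d_L(a,b)}\;=\;\sum_{k\text{ on shorter arc}}\frac{l_k}{d_L(a,b)}\cdot\frac{e_k-e_{k+1}}{l_k},
\]
a convex combination of the $2n$ edge vectors $\pm v_i$ with $v_i:=(e_i-e_{i+1})/l_i$. Hence only these $2n$ points can be vertices of $KR(d_L)$, and I would set up the candidate vertex bijection $v_i\leftrightarrow i^+$, $-v_i\leftrightarrow i^-$ with the vertex set of $\mathrm{Bier}(\mathrm{Short}(L))$.

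\textbf{Faces via 1-Lipschitz functions.} Supporting hyperplanes of $KR(d_L)$ are parameterized by 1-Lipschitz functions $c:[n]\to\mathbb{R}$ modulo constants, via $\langle (e_a{-}e_b)/d_L(a,b),\,c\rangle=(c_a-c_b)/d_L(a,b)\le 1$; for the cyclic metric this is equivalent to $|c_{i+1}-c_i|\le l_i$ for every $i$. To such a $c$ associate the disjoint tight-edge sets
\[
S=\{i:c_{i+1}-c_i=-l_i\},\qquad T=\{i:c_{i+1}-c_i=+l_i\},
\]
so that the exposed face is $F_{S,T}:=\mathrm{conv}(\{v_i:i\in S\}\cup\{-v_i:i\in T\})$. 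Combining the closure constraint $\sum_i(c_{i+1}-c_i)=0$ with the open ranges $c_{i+1}-c_i\in(-l_i,l_i)$ for $i\notin S\cup T$ reduces realizability of $(S,T)$ as an exact tight pattern to the inequality $|\mu_L(S)-\mu_L(T)|<1-\mu_L(S)-\mu_L(T)$, which is equivalent to $\mu_L(S)<\tfrac12$ and $\mu_L(T)<\tfrac12$, i.e.\ to $S,T\in\mathrm{Short}(L)$.

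\textbf{Matching Bier and conclusion.} Genericity of $L$ gives self-Alexander-duality $\mathrm{Short}(L)^*=\mathrm{Short}(L)$, so a face of $\mathrm{Bier}(\mathrm{Short}(L))$ is precisely a pair $A^+\sqcup B^-$ with $A,B\in\mathrm{Short}(L)$ disjoint; under the vertex bijection this matches $F_{S,T}\leftrightarrow S^+\sqcup T^-$ as posets. Since the only linear relation among $\{e_i-e_{i+1}\}_{i=1}^{n}$ is $\sum_i(e_i-e_{i+1})=0$ and shortness forces $|S|+|T|<n$, each $F_{S,T}$ is a simplex of dimension $|S|+|T|-1$. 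Thus $KR(d_L)$ is simplicial, its $(n-2)$-dimensional boundary is combinatorially the Bier sphere, and the isomorphism \eqref{eqn:geodesic} follows.

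\textbf{Main obstacle.} The crux is the realizability equivalence in the second step: one must show the tight pattern $(S,T)$ is achieved \emph{exactly}, not as a proper subpattern of a larger tight configuration. This is precisely where the genericity hypothesis $\mu_L(I)\neq\tfrac12$ is essential, as it rules out the boundary case $\mu_L(S)=\tfrac12$ (or $=\tfrac12$ for $T$) in which the strict-inequality analysis degenerates; without genericity the bijection of face lattices would fail and $KR(d_L)$ could cease to be simplicial.
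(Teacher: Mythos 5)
Your proposal is correct and is essentially the paper's own argument: your telescoping reduction of the generators $(e_a-e_b)/d_L(a,b)$ to the $2n$ edge vectors $\pm v_i$ is exactly the proof of Lemma~\ref{lem:conv}, and your analysis of tight patterns of $1$-Lipschitz functions --- the cycle-closure constraint $\sum_i(c_{i+1}-c_i)=0$ combined with the open ranges on the non-tight increments, giving $\mu_L(S)<\tfrac12$ and $\mu_L(T)<\tfrac12$, together with the converse choice of slack increments --- is precisely the supporting-hyperplane computation in the proof of Theorem~\ref{thm:Bier-polytope} specialized to $\alpha=\beta=1$ (your functional $c$ plays the role of the paper's $z$, and your realizability inequality is \eqref{eqn:final} with $\beta=1$). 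The only difference is packaging: the paper proves the more general polytopality statement for an arbitrary threshold complex and deduces Theorem~\ref{thm:KR-Bier} as the special case $\alpha=1$, whereas you carry out the same computation directly on the cycle in the language of Kantorovich--Rubinstein duality.
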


The proof of Theorem~\ref{thm:KR-Bier}, with all preliminary definitions and introductory facts, can be found in Section~\ref{sec:K-R}.

\medskip
As a corollary of (\ref{eqn:geodesic}) we observe that the Bier spheres associated to complexes of generic short sets are always
polytopal. In Section~\ref{sec:polytopal} (Theorem~\ref{thm:Bier-polytope}) we prove a more general result, somewhat surprising and interesting in itself, that the Bier spheres
of any threshold complex (with an arbitrary quota and not necessarily generic) is polytopal. This should be compared to the fact (proven by an indirect and non-constructive argument) that a vast majority
of Bier spheres are non-polytopal.

\medskip
By an old result of Steinitz all triangulations of $S^2$ are polytopal and the problem of testing if a triangulation of a sphere is polytopal
is known as the ``Simplicial Steinitz problem''. A closely related problem is the study of the asymptotic behavior of the number of nonisomorphic combinatorial types
of triangulated (shellable, polytopal, starshaped, etc.) spheres with $n$-vertices.  Early work of Goodman and Pollack \cite{GP-1, GP-2}, together with the estimates of Kalai \cite{Ka}, showed
that asymptotically very few triangulated spheres are polytopal. Far reaching new results of this type, as well as a guide to some of the more recent publications, can be found in \cite{B-Z, NSW, P-Z}.

\medskip
 Recall that not all triangulations of $(n-2)$-dimensional spheres are starshaped in the sense that they admit a starshaped geometric realization in $\mathbb{R}^{n-1}$.
 An example of such a sphere can be found in \cite[Theorem~5.5]{Ewald}. Our third main result, Theorem~\ref{thm:Bier-starshaped}, claims that all Bier spheres
 (associated to all simplicial complexes $K\subsetneq 2^{[n]}$) are starshaped.

 \medskip
 The notation and terminology in the paper is fairly standard. The book \cite{Ewald} is a general reference for the geometry of convex sets while the book \cite{M} provides an interesting
 and gentle introduction to combinatorial topology, with the emphasis on applications to combinatorics and discrete geometry.

\section{Polytopal Bier spheres}
\label{sec:polytopal}

The Alexander dual of a simplicial complex $K\subsetneq 2^{[n]}$ is the complex $K^\circ = \{I^c \mid I\notin K\}$.
Suppose that $u_1+ u_2+\dots+ u_n = 0$ is a `minimal circuit' in $\mathbb{R}^{n-1}$, meaning that each proper subset of
the collection of vectors $\{u_i\}_{i=1}^n$ is linearly independent. Suppose that $L = (l_1,l_2,\dots, l_n)\in \mathbb{R}^n_+$
is a strictly positive vector. The associated measure (weight distribution) $\mu_L$ on $[n]$ is defined by
$\mu_L(I) = \sum_{i\in I}~l_i$ (for each $I\subseteq [n]$).

\medskip
Given a threshold $\nu>0$, the associated threshold complex is $T_{\mu_L < \nu} := \{I\subseteq [n] \mid \mu_L(I)< \nu\}$.
Without loss of generality we assume that $\mu_L([n]) = l_1+\dots+ l_n = 1$.

\begin{rem}{\rm  If $K = T_{\mu_L < \nu}$ is a threshold complex then $K = T_{\mu_L < \nu - \epsilon}$ for each sufficiently small
$\epsilon > 0$.  It follows that we may assume, without loss of generality, that $\mu_L(I)\neq \nu$ for each $I\subseteq [n]$
and, as a consequence, we may assume that the Alexander dual of $K$ is $K^\circ = T_{\mu_L \leq 1- \nu} = T_{\mu_L < 1- \nu}$. }
\end{rem}

For a simplicial complex $K\subset 2^{[n]}$ let $K^\circ$ its Alexander dual, and let $\Delta_S = {\rm Conv}\{e_i\}_{i\in S}$ be
the geometric simplex spanned by $S\subseteq [n]$.  Recall that for $K, L\subseteq 2^{[n]}$, the deleted join $K\ast_\Delta L$  is a subcomplex of the join
$\Delta_{[n]} \ast \Delta_{[n]}$ defined by
$K\ast_\Delta L := \{A\uplus B \mid A\in K, B\in L, A\cap B=\emptyset\}$.

\medskip
For $K\subsetneq 2^{[n]}$, the associated Bier sphere is the deleted join,
\begin{equation}\label{eq:Bier-def}
Bier(K):= K \ast_\Delta K^\circ \subset \Delta_{[n]} \ast_\Delta \Delta_{[n]} \cong \partial\lozenge_{[n]}
\end{equation}
where $\partial\lozenge_{[n]}$ is the boundary sphere of the $n$-dimensional cross-polytope $\lozenge_{[n]} = {\rm Conv}\{\pm e_i\}_{i=1}^n$.

\begin{theo}\label{thm:Bier-polytope}
$Bier(T_{\mu_L<\nu})$ is isomorphic to the boundary sphere of a convex polytope.
\end{theo}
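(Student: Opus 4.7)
The plan is to invoke the preceding Remark to reduce to the generic case and then realize $Bier(T_{\mu_L<\nu})$ as the boundary complex of an explicit convex $(n-1)$-polytope $P=P(L,\nu)$ obtained as a controlled perturbation of the cross-polytope $\lozenge_{[n]}$.

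\emph{Setup.} By the Remark I may assume $\mu_L(I)\neq\nu$ for every $I\subseteq[n]$, so that $K^\circ=T_{\mu_L<1-\nu}$. Consequently $A\uplus B$ is a face of $Bier(K)$ precisely when $A\cap B=\emptyset$, $\mu_L(A)<\nu$, and $\mu_L(B)<1-\nu$. Its facets are exactly the triples $(i^*,A,B)$ that partition $[n]$ and satisfy the two-sided inequality
\[
\nu-l_{i^*} < \mu_L(A) < \nu.
\]

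\emph{Construction of the polytope.} I would search for $P$ among perturbations of $\lozenge_{[n]}$ whose $2n$ vertices have the schematic form
\[
v_i^+ = e_i + \alpha_i\, w, \qquad v_i^- = -e_i + \beta_i\, w, \qquad i\in[n],
\]
for a fixed auxiliary direction $w\in\mathbb{R}^n$ and scalars $\alpha_i,\beta_i$ that are specific functions of $l_i$ and $\nu$ (natural candidates are $\alpha_i=l_i/\nu$ and $\beta_i=l_i/(1-\nu)$). The parameters $w,\alpha_i,\beta_i$ are to be calibrated so that (i) the $2n$ vertices are affinely of dimension $n-1$, and (ii) the linear functional $\mu_L$, restricted to their affine hull, separates vertex subsets exactly at the threshold levels dictated by $\nu$ and $1-\nu$.

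\emph{Matching the face lattice.} For each candidate Bier facet $(i^*,A,B)$ I would exhibit an explicit supporting hyperplane of $P$ whose normal combines the coordinate functional $e_{i^*}^*$ with a multiple of $\mu_L$, chosen so that equality is attained precisely on $\{v_i^+:i\in A\}\cup\{v_j^-:j\in B\}$. The condition $\mu_L(A)<\nu$ translates into strict inequality at the remaining $+$-vertices, $\mu_L(B)<1-\nu$ does the same for the $-$-vertices, and the lower bound $\mu_L(A)>\nu-l_{i^*}$ prevents absorbing $v_{i^*}^\pm$ into the face, confirming maximality. Conversely, every facet of $P$ omits exactly $n+1$ vertices, which must satisfy a unique (up to scale) positive linear dependence; a sign-pattern analysis would force that dependence into the $(i^*,A,B)$ shape.

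\emph{Main obstacle.} The delicate part is simultaneously ensuring that every Bier face is realized as a face of $P$ while no spurious faces are introduced. Both directions rely crucially on the genericity hypothesis $\mu_L(I)\neq\nu$: it prevents accidental incidences of vertices with threshold-level supporting hyperplanes and provides the strict ``slack'' that turns the combinatorial inequalities defining $Bier(K)$ into open geometric conditions on linear functionals. Once the perturbation parameters are chosen so that this slack propagates uniformly, the combinatorial isomorphism $\partial P\cong Bier(T_{\mu_L<\nu})$ follows by matching facet lattices.
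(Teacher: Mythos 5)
There is a genuine gap here, and in fact the geometric ansatz itself cannot work for a general threshold $\nu$, not just for lack of calibration. Your condition (i) forces the $2n$ points $v_i^+=e_i+\alpha_i w$, $v_i^-=-e_i+\beta_i w$ to lie in a common hyperplane $\{x:\langle c,x\rangle=d\}$; adding the equations $\langle c,e_i\rangle+\alpha_i\langle c,w\rangle=d$ and $-\langle c,e_i\rangle+\beta_i\langle c,w\rangle=d$ gives $(\alpha_i+\beta_i)\langle c,w\rangle=2d$, and $\langle c,w\rangle=0$ would force $c=0$, so $\alpha_i+\beta_i\equiv s$ must be constant in $i$. (In particular your candidate values $\alpha_i=l_i/\nu$, $\beta_i=l_i/(1-\nu)$ already violate (i) unless all $l_i$ are equal.) But once $\alpha_i+\beta_i\equiv s$, the affine involution $x\mapsto sw-x$ sends $v_i^+$ to $v_i^-$, so any polytope $P$ of your form is centrally symmetric and its face lattice is invariant under the swap $v_i^+\leftrightarrow v_i^-$. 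Under your intended facet correspondence this would force $(A,B)\in Bier(K)\Leftrightarrow(B,A)\in Bier(K)$, i.e.\ $K=K^\circ$, which fails for most threshold complexes (self-duality is essentially the case $\nu=1/2$ of Theorem~\ref{thm:KR-Bier}). Concretely, for $n=3$, $l_i\approx 1/3$ and $\nu=0.2$ one has $K=\{\emptyset\}$, $K^\circ$ all proper subsets, and $Bier(K)$ is a triangle on the three barred vertices; no centrally symmetric polygon with your vertex labeling can have this boundary complex. So no choice of $w,\alpha_i,\beta_i$ rescues the plan, and the step you yourself flag as the ``main obstacle'' is precisely where the argument breaks. (Your combinatorial description of the facets of $Bier(T_{\mu_L<\nu})$, the triples $(i^*,A,B)$ with $\nu-l_{i^*}<\mu_L(A)<\nu$, is correct and unaffected.)

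The paper avoids this trap by abandoning the $\pm e_i$ directions of $\lozenge_{[n]}$ altogether. It starts from a minimal circuit $u_1+\cdots+u_n=0$ in $\mathbb{R}^{n-1}$, rescales $y_i=u_i/l_i$ so that $l_1y_1+\cdots+l_ny_n=0$ is the unique linear dependence, and takes $Q_\alpha={\rm Conv}\bigl(\{y_i\}_{i=1}^n\cup\{-\alpha y_i\}_{i=1}^n\bigr)$ with $\beta=1/\alpha$ chosen so that $\nu=\beta/(1+\beta)$, i.e.\ $\alpha=(1-\nu)/\nu$. The two vertex simplices $\Delta$ and $-\alpha\Delta$ are homothetic with ratio $\alpha\neq 1$, and this built-in asymmetry is exactly what encodes the asymmetry between $K=T_{\mu_L<\nu}$ and $K^\circ=T_{\mu_L<1-\nu}$ which your single-direction perturbation of the cross-polytope cannot produce; the face analysis then reduces, via the single relation $\sum_i l_i\langle z,y_i\rangle=0$ for a supporting functional $z$, to the inequalities $\mu_L(I)<\beta/(1+\beta)$ and $\mu_L(J)<1/(1+\beta)$, together with an explicit converse construction of $z$ for each pair $(I,J)$ in the Bier sphere. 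If you want to repair your argument, the realizing polytope must be of this asymmetric ``two homothetic simplices'' type (or some genuinely different construction), not a shift of the cross-polytope's vertices along one fixed direction.
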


\medskip\noindent
{\bf Proof:}  Let  $y_i = \frac{u_i}{l_i}$  which implies that $l_1y_1+l_2y_2+\dots+l_ny_n=0$ is (up to a multiplicative constant)
the unique linear dependence of these vectors.  Let $\alpha > 0$ a positive constant and $\beta = \frac{1}{\alpha}$.

 \medskip
 Let $\Delta := {\rm Conv}\{y_i\}_{i=1}^n \subset \mathbb{R}^{n-1}$ be the simplex spanned by $y_i$ and $\nabla_\alpha := -\alpha\Delta$
 the simplex spanned by the vectors $-\alpha y_i$. We want to show that there exists $\alpha>0$ such that the Bier sphere $Bier(T_{\mu_L<\nu})$
 is isomorphic to the boundary sphere of the convex polytope,
 \begin{equation}\label{eqn:conv-poly}
 Q_\alpha :=  {\rm Conv}(\Delta\cup\nabla_\alpha) = {\rm Conv}\{y_1,y_2,\dots, y_n, -\alpha y_1, -\alpha y_2, \dots, -\alpha y_n\}\, .
 \end{equation}

A linear transform of the collection of vectors (\ref{eqn:conv-poly}), representing vertices of the polytope $Q_\alpha$, is easily found
and can be read off from the following matrix relation,

\begin{equation}\label{eqn:Gale-matrix-1}
  \begin{matrix}
   [ y_1 & y_2 & \dots & y_n & -\alpha y_1 & -\alpha y_2 & \dots & -\alpha y_n]\\
   {}
  \end{matrix}
  \begin{bmatrix}
    L^T & \alpha I_n \\
    0 & I_n
  \end{bmatrix}
  =0
\end{equation}
where $L^T = (l_1,\dots, l_n)^T$ is a column vector and $I_n$ the identity $(n\times n)$-matrix.
If $y$ is the row matrix $y = [y_1 \, y_2 \, \dots \, y_n]$ then the relation (\ref{eqn:Gale-matrix-1}) can be rewritten as,

\begin{equation}\label{eqn:Gale-matrix-2}
  \begin{matrix}
   [ y & -\alpha y ]\\
   {}
  \end{matrix}
  \begin{bmatrix}
    L^T & \alpha I_n \\
    0 & I_n
  \end{bmatrix}
  =0 \, .
\end{equation}

Let  $z : \mathbb{R}^{n-1} \rightarrow  \mathbb{R}$ be a non-zero linear form such that the associated hyperplane
$H_z := \{x\in \mathbb{R}^{n-1} \mid \langle z, x \rangle = 1\}$ is a supporting hyperplane of $Q_\alpha$. The
corresponding face of the polytope $Q_\alpha$ is described by a pair $(I, J)$ of subsets of $[n]$ recording which
vertices of the polytope $Q_\alpha$ belong to the hyperplane $H_z$. More explicitly
\begin{equation}\label{eqn:hyper}
           Q_\alpha \cap H_z = {\rm Conv}(\{y_i\}_{i\in I}\cup\{-\alpha y_j\}_{j\in J}) \, .
\end{equation}
 It follows from (\ref{eqn:Gale-matrix-2}) that,

\begin{equation}\label{eqn:Gale-matrix-3}
  \begin{matrix}
   [ \langle z, y \rangle & \langle z, -\alpha y \rangle  ]\\
   {}
  \end{matrix}
  \begin{bmatrix}
    L^T & \alpha I_n \\
    0 & I_n
  \end{bmatrix}
  =0
\end{equation}
where $\langle z,y \rangle = [\langle z,y_1 \rangle \dots \langle z,y_n \rangle   ]$. It follows from (\ref{eqn:hyper})
that the ordered pair $(I,J)$ of subsets of $[n]$ must satisfy the following:

\begin{align}
(\forall i\in I) \, \langle z, y_i \rangle = 1   &&    (\forall j\in J) \, \langle z, -\alpha y_j \rangle = 1 \label{eqn:prva} \\
(\forall k\notin I) \, \langle z, y_k \rangle < 1   &&    (\forall k\notin J) \, \langle z, -\alpha y_k \rangle < 1 \label{eqn:druga}
\end{align}
From these relations it follows:

\begin{equation}\label{eqn:face-1}
I\cap J = \emptyset  \quad \mbox{and} \quad \emptyset \neq I\cup J \neq [n]
\end{equation}

\begin{equation}\label{eqn:face-3}
(\forall k\notin I\cup J) \, -\beta < \langle z, y_k \rangle < 1
\end{equation}
From (\ref{eqn:face-3}) and the relation,
\begin{equation}\label{eqn:face-4}
\sum_{i\in [n]} l_i\langle z, y_i \rangle = 0 = \sum_{i\in I} l_i - \sum_{j\in J} \beta l_j + \sum_{k\notin I\cup J} l_k \langle z, y_k \rangle
\end{equation}
we deduce the following inequalities,
\begin{equation}\label{eqn:face-5}
-\mu_L((I\cup J)^c)  < \mu_L(I) - \beta \mu_L(J) < \beta \mu_L((I\cup J)^c)  \, .
\end{equation}
The inequalities (\ref{eqn:face-5}) can be rewritten as follows,
\begin{equation}\label{face-6}
 \mu_L(J^c)  > \beta \mu_L(J) \quad \mbox{and} \quad \mu_L(I)  < \beta \mu_L(I^c) \, .
\end{equation}
In light of the assumption $\sum_{i=1}^n l_i =1$ we finally obtain the inequalities,
\begin{equation}\label{eqn:final}
\mu_L(I) < \frac{\beta}{1+\beta} \quad   \mbox{and} \quad \mu_L(J) < \frac{1}{1+\beta} \, .
\end{equation}

In other words each face of the polytope $Q_\alpha$, described by the equation (\ref{eqn:hyper}), is associated a simplex $(I,J)\in Bier(T_{\mu_L< \nu})$
where $\nu = \frac{\beta}{1+\beta}$.

\medskip Conversely, let $(I,J)\in Bier(T_{\mu_L< \nu})$ be a face of the Bier sphere. Then the equation (\ref{eqn:final}) is translated
back to (\ref{eqn:face-5}) and we can choose $\langle z, y_k \rangle$ (for $k\notin I\cup J$) satisfying (\ref{eqn:face-3}) such that the equality
(\ref{eqn:face-4}) is also satisfied.

\medskip
More explicitly, let $X = 1 -\mu_L(I) - \mu_L(J) = \sum_{i\notin I\cup J}~l_i = \mu((I\cup J)^c)$. Then
\begin{equation}\label{eq:XX}
 - X < \mu_L(I) - \beta\mu_L(J) < \beta X
\end{equation}
 and there exists $\gamma\in (0,1)$ such that
 \begin{equation}\label{eq:XXX}
  \mu_L(I) - \beta\mu_L(J) = -\gamma X + (1-\gamma)\beta X
\end{equation}
which is equivalent to
\begin{equation}\label{eq:XXXX}
  0 = \mu_L(I) - \beta\mu_L(J) + X(\gamma + (1-\gamma)(-\beta)) \, .
\end{equation}
 Let us choose $z$ such that $\langle z, y_k\rangle$ satisfies the equations (\ref{eqn:prva}) for $k\in I\cup J$ while for $k\notin I\cup J$
 \begin{equation}\label{eq:XXXXX}
 \langle z, y_k\rangle = \gamma\cdot 1 + (1-\gamma)(-\beta) \, .
 \end{equation}

This is possible in light of the equality (\ref{eqn:face-4}).
In turn this proves the validity of relations (\ref{eqn:prva}) and (\ref{eqn:druga}) and eventually leads to (\ref{eqn:hyper}).
This observation completes the proof of the theorem. \hfill $\square$

\section{Starshaped Bier spheres}
\label{sec:starshaped}

A $d$-dimensional triangulated sphere $\Sigma^{d}$ is {\em starshaped} if there exists an embedding $e: \Sigma^d\rightarrow \mathbb{R}^{d+1}$,
linear (affine) on simplices of $\Sigma^d$, and a point $c\in \mathbb{R}^{d+1}\setminus e(\Sigma^d)$, such that $[c, x] \cap [c,y] = \{c\}$ for each pair $x\neq y$ of
distinct points in $e(\Sigma^d)$.

\medskip
As shown by Ewald and Schulz in \cite{e-s}, for each $d\geq 4$ there exists a $(d-1)$-dimensional simplicial sphere which cannot be embedded in
$\mathbb{R}^d$ as a starshaped set. An example of such a sphere is also described in \cite[Theorem~5.5]{Ewald}.

\medskip
Surprisingly enough all Bier spheres turn out to be starshaped. As a consequence Bier spheres provide (at least statistically)
numerous examples of non-polytopal, starshaped spheres. Indeed, according to \cite{M} there are more than $2^{(2^n/n)-2n^2}$ nonisomorphic Bier spheres, while the number of different combinatorial types of $(n-1)$-dimensional, simplicial convex polytopes with $2n$ vertices is not larger than $2^{4n^3}$.

Note that an exponential upper bound to the number of starshaped sets in terms of the number of facets was recently
proven in \cite[Theorem 2.5]{AB}.

\medskip
From here on we make a clear distinction between combinatorial, geometric, and
topological (deleted) join of simplicial complexes, as emphasized and discussed in \cite[Section~4.2]{M}.
For example the ``combinatorial deleted join'' representation
$\Delta_{[n]} \ast_\Delta \Delta_{[n]} \cong \partial\lozenge_{[n]}$, used in (\ref{eq:Bier-def}),
naturally leads to a ``geometric deleted join'' representation
\begin{equation}\label{eq:join-geometric}
\Delta_{e} \ast_\Delta \Delta_{-e} = \partial\lozenge_{[n]}
\end{equation}
where $\Delta_{e} = {\rm Conv}\{e_i\}_{i=1}^n$ and  $\Delta_{-e} = {\rm Conv}\{-e_i\}_{i=1}^n$. More generally, for each (labeled)
set $b = \{b_i\}_{i=1}^n$ of affinelly independent vectors, there is an associated geometric simplex $\Delta_b = {\rm Conv}\{b_i\}_{i=1}^n$.
Moreover, if $S\in K\subseteq 2^{[n]}$ is a simplex in an abstract simplicial complex, then the associated $b$-realization is the
geometric simplex $R_b(S) = {\rm Conv}\{b_i\}_{i\in S}$.

For example if $\delta = (\delta_1,\dots, \delta_n)$ is defined by $\delta_i = e_i-\frac{u}{n}$, where $u = e_1+\dots+ e_n$, then
\begin{equation}
      \Delta_{\delta} = {\rm Conv}\{\delta_i\}_{i=1}^n \quad \mbox{ {\rm and} }\quad  \Delta_{-\delta} = {\rm Conv}\{-\delta_i\}_{i=1}^n \, .
\end{equation}

\medskip
If $T\subseteq [n]$ then $\overline{T}$ is the corresponding subset of $[\bar{n}] = \{\bar{1}, \bar{2},\dots, \bar{n}\}$.
The `tautological geometric realization' of the abstract simplicial complex $Bier(K) = K\ast_\Delta K^\circ \subset 2^{[n]}\ast 2^{[\bar{n}]}$
is the geometric simplicial complex
\begin{equation}\label{eq:tautological}
\mathcal{R}_{\pm e}(Bier(K)) = \{R_e(S)\ast R_{-e}(T) \mid (S, T)\in K\ast_\Delta K^\circ\}
\end{equation}
where $R_e(S)\ast R_{-e}(T) = {\rm Conv}(R_e(S)\cup R_{-e}(T)) \subset \partial\lozenge_{[n]}$ is the geometric join of simplices.
Similarly, we define the `canonical geometric realization' $\mathcal{R}_{\pm\delta}(Bier(K))$ by replacing $e$ and $-e$ in
(\ref{eq:tautological}) respectively by $\delta$ and $-\delta$. By construction
\begin{equation}
\mathcal{R}_{\pm\delta}(Bier(K)) = \pi(\mathcal{R}_{\pm e}(Bier(K))) \subset H_0
\end{equation}
 where $\pi : \mathbb{R}^n \rightarrow H_0 := \{x\in \mathbb{R}^n \mid \langle u,x \rangle = 0\}$ is the orthogonal projection.

 It remains to be shown that $\mathcal{R}_{\pm\delta}(Bier(K))$ is indeed a geometric realization of the abstract simplicial complex
 $Bier(K)$ and that it is precisely the desired starshaped realization.

\begin{theo}\label{thm:Bier-starshaped}
Let $K\subsetneq 2^{[n]}$ be a simplicial complex and let $Bier(K)$ be the associated Bier sphere. Then $\mathcal{R}_{\pm\delta}(Bier(K))$
is a geometric realization of the abstract simplicial complex $Bier(K)$ which is starshaped as a subset of $H_0 := \{x\in \mathbb{R}^n \mid \langle u,x \rangle = 0\}$.
\end{theo}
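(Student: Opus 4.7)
The plan is to take the origin of $H_0$ as the prospective starshaped center and to establish, in one stroke, that radial projection $x \mapsto x/\|x\|$ is a continuous bijection from $|\mathcal{R}_{\pm\delta}(Bier(K))|$ onto the unit sphere $S(H_0)$. A continuous bijection between compact Hausdorff spaces is a homeomorphism, so this will simultaneously yield starshapedness with respect to $0$ and show that each point of the realization lies in the relative interior of a unique ``home'' face; combined with the affine-independence step below, this forces the PL map $Bier(K) \to H_0$ to be injective and promotes $\mathcal{R}_{\pm\delta}(Bier(K))$ to a genuine geometric realization.

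Two preliminaries set up the computation. First, every face $(S,T) \in Bier(K)$ satisfies $S \cap T = \emptyset$ and $S \cup T \subsetneq [n]$, the latter because $T = S^c$ would require $S \in K$ and $S^c \in K^\circ$ at once, equivalently $S \in K$ and $S \notin K$. Since $\sum_{k=1}^n \delta_k = 0$ is the only linear relation (up to scale) among the $\delta_k$'s and $|S \cup T| \leq n-1$, the vertices $\{\delta_i\}_{i \in S} \cup \{-\delta_j\}_{j \in T}$ are linearly, hence affinely, independent, so each $\Sigma(S,T) := \mathcal{R}_{\pm\delta}(S \uplus T)$ is a non-degenerate simplex. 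Second, $0 \notin |\mathcal{R}_{\pm\delta}(Bier(K))|$: a hypothetical expression $0 = \sum_{i \in S} \lambda_i \delta_i - \sum_{j \in T}\mu_j \delta_j$ with $\lambda_i, \mu_j \geq 0$ summing to $1$, expanded via $\delta_i = e_i - u/n$ and read off at the coordinate $e_k$ for any $k \notin S \cup T$, forces all coefficients to vanish.

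For the core computation fix $\eta \in H_0 \setminus \{0\}$ and look for $s > 0$ and a face $(S,T) \in Bier(K)$ with $s\eta$ in the relative interior of $\Sigma(S,T)$. Writing $s\eta = \sum_k \gamma_k \delta_k$ with $\gamma_i > 0$ on $S$, $\gamma_j < 0$ on $T$ and $\gamma_k = 0$ elsewhere, and expanding in the $e$-basis, one gets $\gamma_k = s\eta_k + \alpha/n$ for some scalar $\alpha$. The vanishing condition off $S \cup T$ forces $\eta_k$ to take a common value $\tau$ on $[n] \setminus (S \cup T)$, whence $S = \{i : \eta_i > \tau\}$ and $T = \{j : \eta_j < \tau\}$; so candidate faces on the ray are indexed by the distinct coordinate values $\tau_1 < \cdots < \tau_m$ of $\eta$. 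Setting $V_k := \{i : \eta_i \geq \tau_k\}$ (with $V_1 = [n]$ and $V_{m+1} = \emptyset$), the candidate from $\tau = \tau_k$ lies in $Bier(K)$ iff $V_{k+1} \in K$ and $V_k \notin K$. But $\emptyset = V_{m+1} \subsetneq V_m \subsetneq \cdots \subsetneq V_1 = [n]$ is a strict chain whose smallest member is in $K$ and whose largest is not, so by downward closure of $K$ the transition occurs at exactly one index $k^\ast$. This yields a unique intersection point $s^\ast\eta$ with $s^\ast = 1/\sum_i|\eta_i - \tau_{k^\ast}|$. The main subtlety to handle is the non-generic case $m < n$, where the distinguished face has positive codimension; the chain argument is insensitive to this, and compactness of the realization then upgrades the pointwise radial bijection to a homeomorphism onto $S(H_0)$, completing the proof.
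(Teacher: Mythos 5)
Your argument is correct, and it takes a genuinely different route from the paper's. The paper geometrizes de Longueville's proof: it shows that the cones over the geometric joins $R_\delta(S)\ast R_{-\delta}(T)$ form a complete simplicial fan in $H_0$ by comparing their shore subdivision with the fan of the barycentric subdivision of $\partial\Delta_\delta$, using the fact that $\nu_{-\delta}(T_j)$ and $\nu_\delta(T_j^c)$ span the same ray and matching chains $S_1\subset\dots\subset S_p\subseteq S\subset T^c\subseteq T_q^c\subset\dots\subset T_1^c$. You verify completeness of the same fan by direct ray shooting: solving $s\eta=\sum_k\gamma_k\delta_k$ in coordinates, you find that a candidate face must be $\bigl(\{i:\eta_i>\tau\},\{j:\eta_j<\tau\}\bigr)$ for $\tau$ a coordinate value of $\eta$ (your preliminary observation that $S\cup T\subsetneq[n]$ for every face of $Bier(K)$ is exactly what makes $\tau$ a coordinate value), and then the unique transition from ``not in $K$'' to ``in $K$'' along the chain of superlevel sets $\emptyset=V_{m+1}\subsetneq\dots\subsetneq V_1=[n]$ — which silently uses the standing nondegeneracy assumptions $\emptyset\in K$ and $[n]\notin K$ — produces exactly one face and one scale $s^\ast$ per ray. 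That transition argument is the coordinate shadow of the chain comparison in the paper, so the combinatorial heart is the same; what your execution buys is self-containedness (no shore subdivision, no appeal to the barycentric fan of $\partial\Delta_\delta$), explicit formulas for the simplex hit by a given direction and for the radial function, and a single computation that simultaneously yields injectivity of the piecewise-linear realization map (via disjointness of open simplices) and starshapedness about the origin; what the paper's approach buys is the conceptual link to de Longueville's subdivision proof and the existing machinery it cites. Your compressed steps (that $0\notin\mathcal{R}_{\pm\delta}(Bier(K))$, that sub-pairs of faces are again faces of $Bier(K)$ so each point of the realization lies in a unique open simplex, and that a continuous bijection from a compact space onto $S(H_0)$ is a homeomorphism) all check out.
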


\medskip\noindent
{\bf Proof:} Let ${\rm cone}(C) = \cup_{\lambda \geq 0}~\lambda C$ be the convex cone with the apex at the origin generated by a convex set $C\subset H_0$.
The theorem will follow from the observation that the collection of convex cones
\begin{equation}\label{eq:cone}
{\rm Cone}_{\pm\delta}(K) = \{{\rm Cone}(R_\delta(S)\ast R_{-\delta}(T)) \mid (S,T)\in K\ast_\Delta K^\circ\}
\end{equation}
is a complete simplicial fan in $H_0$.  Recall \cite[Chapter~III]{Ewald} that a family $\Sigma$ of simplicial cones (in $V$) with apex $0$ is
a complete simplicial fan if $\Sigma$ is a covering of $V$ and for each two cones $C_1, C_2\in \Sigma$ the intersection $C_1\cap C_2$ is
their common face which is also an element in $\Sigma$.

\medskip
We establish that (\ref{eq:cone}) is a complete fan by showing that the associated geometric ``shore subdivision'' ${\rm Shore}_{\pm\delta}(K)$ (see \cite[Section~4.3]{long-2}),
obtained by the shore subdivision of each  ${\rm cone}(R_\delta(S)\ast R_{-\delta}(T))\in {\rm Cone}_{\pm\delta}(K)$, coincides with the fan $\Sigma$ generated by the barycentric
subdivision of the boundary of the simplex $\Delta_\delta$.

\medskip
In the sequel we denote by $\nu(\Delta)$ the barycenter of a geometric simplex $\Delta$. If $\Delta = R_b(S)$ we also write $\nu_b(S):= \nu(R_b(S))$.
By definition each cone $C\in {\rm Shore}_{\pm\delta}(K)$, subdividing ${\rm cone}(R_\delta(S)\ast R_{-\delta}(T))$,  is positively spanned by the vectors
\begin{equation}\label{eq:shore-1}
\nu_\delta(S_1),\dots,  \nu_\delta(S_p), \nu_{-\delta}(T_q), \nu_{-\delta}(T_{q-1}),\dots, \nu_{-\delta}(T_1)
\end{equation}
where
\begin{equation}\label{eq:shore-chain-1}
S_1\subset S_2 \subset \dots \subset S_p \subseteq S \quad \mbox{ {\rm and} } \quad  T_1\subset T_2 \subset \dots \subset T_q \subseteq T \, .
\end{equation}
On the other hand the cone spanned by (\ref{eq:shore-1}) coincides with the cone positively spanned by the vectors
\begin{equation}\label{eq:shore-2}
\nu_\delta(S_1),\dots,  \nu_\delta(S_p), \nu_{\delta}(T_q^c), \nu_{\delta}(T_{q-1}^c),\dots, \nu_{\delta}(T_1^c)
\end{equation}
(where $T_j^c = 2^{[n]}\setminus T_j$). In light of the fact that $(S,T)\in K\ast_\Delta K^\circ$, the condition (\ref{eq:shore-chain-1}) is equivalent to
the condition
\begin{equation}\label{eq:shore-chain-2}
S_1\subset S_2 \subset \dots \subset S_p \subseteq S \subset T^c \subseteq T_q^c \subset T_{q-1}^c \subset \dots \subset T_1
\end{equation}
This is precisely the condition that the positive span of (\ref{eq:shore-chain-2}) is a cone in $\Sigma$. \hfill $\square$

\bigskip
The reader familiar with \cite{long-1} (see also \cite{long-2}) will agree that the proof of Theorem~\ref{thm:Bier-starshaped} can be concisely described as a
geometrization of the short and elegant proof of Mark de Longueville that $Bier(K)$ triangulates a sphere. Note however that the very existence of a canonical starshaped
realization $\mathcal{R}_{\pm\delta}(Bier(K))$ of $Bier(K)$ is interesting in itself and have some interesting consequences. For example it allows to compare Bier spheres by the volume of the associated starshaped body
\begin{equation}
     {\rm Star}(K) = \{\lambda x\in H_0 \mid x\in \mathcal{R}_{\pm\delta}(Bier(K)) \mbox{ {\rm and} } 0\leq \lambda \leq 1  \} \, .
\end{equation}
Moreover, it allows us to give a geometric interpretation of the classification of autodual simplicial complexes described in \cite{tim}.

\section{Kantorovich-Rubinstein polytopes}
\label{sec:K-R}

Let $(X,\rho)$, $\vert X\vert =n$, be a finite metric space and let $V(X) := \mathbb{R}^X \cong \mathbb{R}^n$ be the associated vector space
of real valued functions (weight distributions, signed measures) on $X$. Let $V_0(X) := \{\mu\in V(X) \mid \mu(X)=0\}$ be the
vector subspace of measures with total mass equal to zero, and $\Delta_X := \{\mu\in V(X)\mid \mu(X) = 1  \mbox{ {\rm and} } (\forall x\in X)\, \mu(\{x\})\geqslant 0\}$
the simplex of probability measures.

\medskip
Let $\mathcal{T}_\rho(\mu, \nu)$ be the cost of the optimal transportation of measure
$\mu$ to measure $\nu$, where the cost of transporting the unit mass from $x$ to $y$ is $\rho(x,y)$. Then, as shown in \cite{v13, Vil}, there exists a norm
$\|\cdot \|_{KR}$ on $V_0(X)$ (called the Kantorovich-Rubinstein norm), such that,
\[
             \mathcal{T}_\rho(\mu, \nu) = \|\mu - \nu \|_{KR},
\]
for each pair of probability measures $\mu, \nu\in \Delta_X$.

\begin{defin}\label{def:Kantor}
The Kantorovich-Rubinstein polytope $KR(\rho)$, associated to a finite metric space $(X,\rho)$, is the unit ball of the $KR$-norm in $V_0(X)$,
\begin{equation}\label{eqn:KR-poly}
  KR(\rho) = \{x\in V_0(X) \mid \|x \|_{KR} \leqslant 1\}.
\end{equation}
\end{defin}

The following explicit description for $KR(\rho)$ can be deduced from the Kantorovich-Rubinstein theorem (Theorem~1.14 in \cite{Vil}),
\begin{equation}\label{eqn:KR-conv}
  KR(\rho) =  {\rm Conv}\left\{  \frac{e_x - e_y}{\rho(x,y)} \mid x,y\in X \right\},
\end{equation}
where $\{e_x\}_{x\in X}$ is the canonical basis in $\mathbb{R}^X$. More information about KR-polytopes can be found in \cite{dh, gp, jjz}.

\subsection{Metrics induced by weighted graphs}

Let $\Gamma$ be a simple graph on the set of vertices $V(\Gamma) = [n]$ with the set of edges $E(\Gamma) \subset 2^{[n]}$.
We say that the graph $\Gamma$ is positively weighted if we have chosen a positive weight function $w:E(\Gamma)\rightarrow \mathbb{R}_+$.

\begin{defin}\label{def:metric_space_graph}
Let $\Gamma=\Gamma([n],E(\Gamma),w)$ be a connected graph with a positive weight function $w:E(\Gamma)\rightarrow \mathbb{R}_+$.
The associated ``geodesic metric'' $d_\Gamma$ on $[n]$ is defined by
\begin{align}\label{eq:path}
d_\Gamma(i,j)=d_{i,j}=\underset{S \in \mathcal{P}_{ij}}{\min}  \sum_{ e \in S}  w(e)  \, ,
\end{align}
where $\mathcal{P}_{ij}$ is the collection of all paths connecting vertices $i$ and $j$.
\end{defin}

\noindent
Definition~\ref{def:metric_space_graph} is meaningless if the graph is not connected so in all subsequent statements
we tacitly assume that $\Gamma$ is a connected graph.
\begin{lema}\label{lem:conv}
Let $\left([n],d_\Gamma \right)$ be the geodesic metric space induced by a positively weighted graph $\Gamma=([n],E(\Gamma),w)$.
Then
\begin{align}
KR(d_\Gamma) = \mathrm{Conv} \left( \{\pm v_{i,j} \}_{\{i,j\} \in E(\Gamma)} \right),
\end{align}
where $v_{i,j}=\frac{e_i-e_j}{d_{i,j}}$.
\end{lema}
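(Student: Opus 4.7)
The plan is to deduce Lemma \ref{lem:conv} directly from formula (\ref{eqn:KR-conv}), which already identifies $KR(d_\Gamma)$ with the convex hull of all vectors $(e_i-e_j)/d_{i,j}$ for $i\neq j$. The inclusion $\mathrm{Conv}(\{\pm v_{i,j}\}_{\{i,j\}\in E(\Gamma)}) \subseteq KR(d_\Gamma)$ is immediate, since every generator of the right-hand side already appears (with both signs) among the generators listed in (\ref{eqn:KR-conv}). So only the reverse inclusion requires argument: we must show that for each pair $\{i,j\}$ with $i\neq j$ (not necessarily an edge), the vector $v_{i,j}=(e_i-e_j)/d_{i,j}$ lies in the convex hull of the $\pm v_{k,l}$ indexed by actual edges.

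To this end, I would fix an arbitrary pair $i\neq j$ in $[n]$ and choose a geodesic path $i=u_0,u_1,\dots,u_m=j$ in $\Gamma$ realizing the minimum in (\ref{eq:path}), so that
\[
   d_{i,j}=\sum_{k=0}^{m-1} w(\{u_k,u_{k+1}\})=\sum_{k=0}^{m-1} d_{u_k,u_{k+1}},
\]
where the second equality uses that a subpath of a geodesic is itself a geodesic (hence $d_{u_k,u_{k+1}}=w(\{u_k,u_{k+1}\})$). Using the telescoping identity $e_i-e_j=\sum_{k=0}^{m-1}(e_{u_k}-e_{u_{k+1}})$, I would then write
\[
   v_{i,j}=\frac{e_i-e_j}{d_{i,j}}
     =\sum_{k=0}^{m-1}\frac{d_{u_k,u_{k+1}}}{d_{i,j}}\cdot\frac{e_{u_k}-e_{u_{k+1}}}{d_{u_k,u_{k+1}}}
     =\sum_{k=0}^{m-1}\lambda_k\,\varepsilon_k\,v_{\{u_k,u_{k+1}\}},
\]
where $\lambda_k:=d_{u_k,u_{k+1}}/d_{i,j}>0$, $\sum_k\lambda_k=1$, and $\varepsilon_k\in\{+1,-1\}$ accounts for the chosen orientation of the edge $\{u_k,u_{k+1}\}\in E(\Gamma)$. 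This exhibits $v_{i,j}$ as a convex combination of vectors in $\{\pm v_{k,l}\}_{\{k,l\}\in E(\Gamma)}$, completing the reverse inclusion.

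The argument requires essentially no new machinery beyond (\ref{eqn:KR-conv}); the only delicate point is the geodesic subpath property that justifies $d_{u_k,u_{k+1}}=w(\{u_k,u_{k+1}\})$, which is what guarantees that the coefficients $\lambda_k$ really sum to $1$. This is the step I would single out as requiring care, although it is standard in shortest-path arguments. Once it is in place, combining the two inclusions yields the desired equality.
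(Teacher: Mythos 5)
Your proof is correct and follows essentially the same route as the paper: both reduce to formula (\ref{eqn:KR-conv}) and express $v_{i,j}$ for a non-edge pair as a convex combination of the edge generators along a geodesic path, with coefficients proportional to the edge lengths. If anything, you are slightly more explicit than the paper about why the coefficients sum to $1$ (the geodesic subpath property), which the paper leaves implicit.
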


\medskip\noindent
{\bf Proof:}
Assume that $\{k,l\} \not \in E(\Gamma)$.
Suppose that $S\in \mathcal{P}_{kl}$ is a path connecting $k$ and $l$ where the minimum in (\ref{eq:path}) is attained. By re-enumerating the vertices we may assume that
$S = (\{k,{k+1}\},\{k+1,{k+2}\},\dots,\{l-1,l\})$. Let
\begin{align*}
\alpha_i&=\frac{d_{k+i,k+i+1}}{\sum_{j=0}^{l-k-1}d_{k+j,k+j+1}}, \textrm{ for } i \in \{0,\ldots,l-k-1\}.
\end{align*}
Then $\sum_{j=0}^{l-k-1}\alpha_j=1$ and
$v_{k,l}=\alpha_0 v_{k,k+1}+\ldots+\alpha_{l-k-1}v_{l-1,l}$ which completes the proof of the lemma.  \hfill $\square$

\medskip\noindent
Let us recall that if $K={\rm Conv}(X\cup\{\pm v\})$ is such that $0\in {\rm Conv}(X)$ and $v$ is not in the vector subspace spanned by $X$, than $K$ is a suspension over ${\rm Conv}(X)$,
\begin{align*}
K = {\rm Susp}\left( {\rm Conv}(X) \right).
\end{align*}
As a direct corollary we get the following lemma.

\begin{lema}\label{lema:KR-susp}
Let $\Gamma$ be a positively weighted graph, $X$ subset of its vertices, and $x \in X$ such that $\Gamma|_{X^c \cup \{x\}}$ is connected, $\Gamma|_X$ is a non-trivial tree, and $E(\Gamma) = E(\Gamma|_{X^c \cup \{x\}}) \cup E(\Gamma|_X)$.
Then $KR(\Gamma)$ can be expressed as an iterated suspension, $$KR(\Gamma) = {\rm Susp}^{|X|-1} (KR(\Gamma|_{X^c\cup\{x\}}))\, .$$
\end{lema}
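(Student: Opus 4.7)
The plan is to iterate the suspension observation stated just before the lemma, peeling off one leaf of the tree $\Gamma|_X$ at a time. Formally, I would induct on $|X|$. The base case $|X|=1$ is vacuous: then $\Gamma|_X$ has no edges, $E(\Gamma)=E(\Gamma|_{X^c\cup\{x\}})$, and $\mathrm{Susp}^{|X|-1}=\mathrm{Susp}^0$ is the identity.

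For the inductive step I would use Lemma~\ref{lem:conv} to write
\[
KR(\Gamma) \;=\; \mathrm{Conv}\bigl(\{\pm v_{i,j}\}_{\{i,j\}\in E(\Gamma|_{X^c\cup\{x\}})} \;\cup\; \{\pm v_{i,j}\}_{\{i,j\}\in E(\Gamma|_X)}\bigr),
\]
and then pick a leaf $y\in X\setminus\{x\}$ of the tree $\Gamma|_X$, with unique tree-neighbour $z$. The hypothesis $E(\Gamma)=E(\Gamma|_{X^c\cup\{x\}})\cup E(\Gamma|_X)$ forces $\{y,z\}$ to be the only edge of $\Gamma$ incident to $y$ (any other edge through $y$ would have to lie in $E(\Gamma|_{X^c\cup\{x\}})$, contradicting $y\notin X^c\cup\{x\}$). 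Consequently $v_{y,z}=(e_y-e_z)/d_{y,z}$ is the unique generator of $KR(\Gamma)$ with a nonzero $y$-coordinate, so it lies outside the linear span of all remaining generators.

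Next I would invoke the suspension criterion recalled just before the lemma: the convex hull of the remaining generators contains the origin (as the midpoint of any pair $\pm v_{i,j}$), and $v_{y,z}$ is not in its linear span, so
\[
KR(\Gamma) \;=\; \mathrm{Susp}\bigl(KR(\Gamma-y)\bigr),
\]
where $\Gamma-y$ is the graph obtained by deleting the vertex $y$ (and its unique edge $\{y,z\}$). I would then check that $\Gamma-y$, together with the subset $X'=X\setminus\{y\}$ and the same distinguished vertex $x$, satisfies the hypotheses of the lemma: $(\Gamma-y)|_{X'}$ is the tree $\Gamma|_X$ with a leaf removed, $(\Gamma-y)|_{(X')^c\cup\{x\}}=\Gamma|_{X^c\cup\{x\}}$ is unchanged and hence connected, and the edge-set decomposition is preserved. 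The inductive hypothesis gives $KR(\Gamma-y)=\mathrm{Susp}^{|X|-2}(KR(\Gamma|_{X^c\cup\{x\}}))$, and one further suspension yields the claimed identity.

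The only subtlety I anticipate is bookkeeping for the ambient vector space: the generators indexed by $E(\Gamma)\setminus\{\{y,z\}\}$ all have vanishing $y$-coordinate, so the polytope they span lives naturally in the hyperplane $\{x_y=0\}\subset V_0([n])$, which must be identified with $V_0([n]\setminus\{y\})$ via the obvious coordinate projection in order to recognise the inductive object as $KR(\Gamma-y)$. Once this identification is made explicit, the suspension step is immediate and the proof reduces to the routine verification that leaf removal preserves the lemma's hypotheses.
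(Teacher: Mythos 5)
Your proof is correct and follows essentially the same route as the paper: an induction on $|X|$ that peels off a leaf of the tree $\Gamma|_X$ and applies the suspension criterion stated before the lemma, since the leaf's unique incident generator lies outside the span of the remaining ones. You merely spell out the details (uniqueness of the leaf edge, the coordinate-hyperplane identification, preservation of the hypotheses after leaf removal) that the paper leaves implicit in its two-line argument.
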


\begin{proof}
If $|X|=2$ we may assume without loss of generality that $X=\{n-1,n\}$ and that $n$ is the isolated vertex.
Since $v_{n-1,n}$ is not in the linear span of $KR(\Gamma|_{X^c\cup\{x\}})$,  there is an isomorphism $KR(\Gamma) \cong {\rm Susp}(KR(\Gamma|_{X^c\cup\{x\}}))$.
The general case of the lemma follows by induction on $|X|$.
\end{proof}

\noindent
The following proposition is an immediate consequence of Lemma~\ref{lema:KR-susp}.

\begin{prop}
Let $T = ([n], E(T), w)$ be a positively weighted tree on $[n]$. Then $KR(d_T) \cong \lozenge_{[n-1]}$. In other words, all $KR$-polytopes associated to weighted trees
on $[n]$ are combinatorially equivalent.
\end{prop}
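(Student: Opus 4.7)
The plan is to proceed by induction on $n$, invoking Lemma~\ref{lema:KR-susp} to strip off one leaf of $T$ at each step. Recall that the cross-polytope satisfies $\lozenge_{[k+1]} \cong {\rm Susp}(\lozenge_{[k]})$, so every suspension corresponds to a single increment of the target cross-polytope's dimension.

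\emph{Base case.} For $n=2$, the tree $T$ is a single edge $\{1,2\}$ and formula (\ref{eqn:KR-conv}) gives $KR(d_T) = {\rm Conv}\{\pm v_{1,2}\}$, which is a line segment, i.e.\ $\lozenge_{[1]}$.

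\emph{Inductive step.} Assume the claim for all weighted trees on fewer than $n$ vertices, with $n\geq 3$. Pick any leaf $v$ of $T$ and let $u$ be its unique neighbor. Set $X = \{u,v\}$ and $x = u$. Because $v$ is a leaf, the edge set of $T$ splits as $E(T) = E(T\setminus v) \sqcup \{\{u,v\}\}$; the graph $T|_{X^c\cup\{x\}} = T\setminus v$ is a connected tree on $n-1$ vertices; and $T|_X$ is the non-trivial tree consisting of the single edge $\{u,v\}$. The hypotheses of Lemma~\ref{lema:KR-susp} are thus met with $|X|-1 = 1$, yielding $KR(d_T) \cong {\rm Susp}(KR(d_{T\setminus v}))$. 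Combining this with the inductive hypothesis $KR(d_{T\setminus v}) \cong \lozenge_{[n-2]}$ gives $KR(d_T) \cong {\rm Susp}(\lozenge_{[n-2]}) \cong \lozenge_{[n-1]}$, as desired.

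The only bookkeeping point worth verifying is the compatibility of the two metrics involved: the vertices of $KR(d_{T\setminus v})$ must truly coincide with the corresponding vertices of $KR(d_T)$. This holds because $v$ being a leaf implies that no shortest $T$-path between two vertices of $T\setminus v$ passes through $v$, so $d_T$ restricted to $V(T\setminus v)$ agrees with $d_{T\setminus v}$ and the vectors $v_{i,j} = (e_i-e_j)/d_{i,j}$ for $\{i,j\}\in E(T\setminus v)$ are unchanged. I foresee no real obstacle, since the proposition is genuinely an immediate iterated application of the suspension lemma; one could equally well bypass the induction by applying Lemma~\ref{lema:KR-susp} a single time with $X = [n]\setminus\{x_0\}$ for any fixed vertex $x_0$, obtaining $n-2$ suspensions at once.
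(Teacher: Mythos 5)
Your proof is correct and follows essentially the paper's route: the paper records the proposition as an immediate consequence of Lemma~\ref{lema:KR-susp}, and your leaf-stripping induction is exactly that lemma applied with $|X|=2$ at each step (together with the standard facts ${\rm Susp}(\lozenge_{[k]})\cong\lozenge_{[k+1]}$ and that deleting a leaf does not change geodesic distances among the remaining vertices). Only your closing one-shot variant needs a small correction: with $X=[n]\setminus\{x_0\}$ the hypotheses of Lemma~\ref{lema:KR-susp} (that $T|_X$ be a tree and that $E(T)=E(T|_{X^c\cup\{x\}})\cup E(T|_X)$) force $x_0$ to be a leaf and $x$ its unique neighbour, so ``any fixed vertex $x_0$'' should read ``any leaf $x_0$''.
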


\subsection{Proof of Theorem~\ref{thm:KR-Bier}}

Theorem~\ref{thm:KR-Bier} says, in a nutshell, that the boundary of the Kantorovich-Rubinstein polytope of the geodesic metric
$d_L$ coincides with the complex of short sets of the associated linkage $\hat{L}$. Here we obtain this result as a corollary of Theorem~\ref{thm:Bier-polytope}.

\medskip\noindent
{\bf Proof of Theorem~\ref{thm:KR-Bier}}: Let $\Gamma$ be a positively weighted cycle. More explicitly
$\Gamma = (V(\Gamma), E(\Gamma), w)$ is a positively weighted graph where
\begin{align*}
V(\Gamma) = [n], E(\Gamma) = \{ \{1,2\}, \{2,3\}, \dots, \{n-1,n\}, \{n,1\} \}
\end{align*}
and $w(\{i, i+1\}) = l_i$ for each $i\in [n]$. (Here and to the end of this section we use the convention that $n+1:= 1$.)

\medskip
By Lemma~\ref{lem:conv},
\begin{equation}
KR(d_L) = {\rm Conv}\left\{\pm \frac{u_i}{l_i}\right\}_{i=1}^n = {\rm Conv}\{\pm y_i \}_{i=1}^n
\end{equation}
where  $y_i = \frac{u_i}{l_i} = \frac{e_{i+1}-e_i}{l_i}$.  By comparison with (\ref{eqn:conv-poly}) we observe that $\alpha = \beta =1$ and
\begin{equation}
KR(d_\Gamma) = Q_1 = {\rm Conv}(\Delta \cup \nabla) \, .
\end{equation}
Finally by (\ref{eqn:final}) we observe that
\begin{equation}
Q_1 = {\rm Short}(L) := \{I\subseteq [n] \mid \mu_L(I) < 1/2\}
\end{equation}
and the result follows as a consequence of Theorem~\ref{thm:Bier-polytope}. \hfill $\square$

\medskip\noindent
It is known, see \cite{far}, that moduli spaces $M_L$ and $M_{L^\sigma}$ of two linkages $L = (l_1,\dots, l_n)$ and $L^\sigma = (l_{\sigma(1)},\ldots,l_{\sigma(n)})$,
where $\sigma\in \Sigma_n$ is a permutation, are homeomorphic. Since ${\rm Short}(L) \cong {\rm Short}(L^\sigma)$, it follows from Theorem~\ref{thm:KR-Bier}  that
the polytopes $KR(d_L)$ and $KR(d_{L^\sigma})$ are  combinatorially isomorphic. Here we give a direct proof by constructing an explicit affine isomorphism.

\begin{prop}
Let $L=(l_1,\ldots,l_n)$ and $L^\sigma=(l_{\sigma(1)},\ldots,l_{\sigma(n)})$ where $\sigma \in \Sigma_n$ is a permutation.
Then $KR(L) \cong KR(L^\sigma)$.
\end{prop}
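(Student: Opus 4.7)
The plan is to construct the affine isomorphism directly from the Gale-type description of the vertices furnished by Lemma~\ref{lem:conv}, exploiting the observation that the two polytopes are convex hulls of sign-symmetric minimal circuits in $V_0 = \{x\in\mathbb{R}^n \mid \sum x_i = 0\}$ whose defining linear dependencies are the same up to a reordering of coefficients.

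First I would write down the vertex sets. By Lemma~\ref{lem:conv} (together with the computation in the proof of Theorem~\ref{thm:KR-Bier}),
\[
KR(L) = \mathrm{Conv}\{\pm y_i\}_{i=1}^n, \qquad y_i = \frac{e_{i+1}-e_i}{l_i},
\]
and likewise
\[
KR(L^\sigma) = \mathrm{Conv}\{\pm y_i^\sigma\}_{i=1}^n, \qquad y_i^\sigma = \frac{e_{i+1}-e_i}{l_{\sigma(i)}}.
\]
The telescoping identity gives the minimal linear dependencies $\sum_{i=1}^n l_i y_i = 0$ and $\sum_{i=1}^n l_{\sigma(i)} y_i^\sigma = 0$. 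Substituting $j=\sigma(i)$ in the second one rewrites it as $\sum_{j=1}^n l_j \, y_{\sigma^{-1}(j)}^\sigma = 0$, so the reindexed vectors $w_j := y_{\sigma^{-1}(j)}^\sigma$ satisfy exactly the same dependence $\sum l_j w_j = 0$ as the $y_j$.

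Next I would use the general principle that two minimal circuits in an $(n-1)$-dimensional vector space with identical dependence coefficients are related by a unique linear isomorphism. Concretely, since $\{y_1,\dots,y_{n-1}\}$ is a basis of $V_0$, I define the linear map $\phi : V_0 \to V_0$ by $\phi(y_j) := w_j = y_{\sigma^{-1}(j)}^\sigma$ for $j=1,\dots,n-1$. The only thing to verify is the compatibility at the remaining vertex: using $y_n = -\tfrac{1}{l_n}\sum_{j<n} l_j y_j$ and the reindexed dependence $\sum_j l_j w_j = 0$, one computes
\[
\phi(y_n) = -\tfrac{1}{l_n}\sum_{j=1}^{n-1} l_j w_j = w_n = y_{\sigma^{-1}(n)}^\sigma,
\]
so $\phi$ sends the vertex set $\{\pm y_j\}_{j=1}^n$ bijectively onto $\{\pm y_i^\sigma\}_{i=1}^n$.

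Finally, since $\phi$ is a linear automorphism of $V_0$ mapping the vertex set of $KR(L)$ onto the vertex set of $KR(L^\sigma)$, it restricts to an affine isomorphism $KR(L) \cong KR(L^\sigma)$. There is no real obstacle here: the content of the argument is the observation about minimal circuits, and the only calculation required is the one-line check that $\phi(y_n)$ lands on the correct vertex, which is forced by the two parallel linear dependencies. This simultaneously recovers, and refines, the combinatorial isomorphism that follows from Theorem~\ref{thm:KR-Bier} via $\mathrm{Short}(L) \cong \mathrm{Short}(L^\sigma)$.
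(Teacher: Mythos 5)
Your proof is correct, but it takes a genuinely different route from the paper's. The paper argues by reduction to generators of $\Sigma_n$: a cyclic shift is just a relabeling of the basis vectors, and an adjacent transposition such as $\sigma=(1,2)$ is realized by the reflection of $V_0$ in the hyperplane containing $e_4-e_3,\dots,e_n-e_{n-1},e_1-e_n$ that bisects the angle between $e_2-e_1$ and $e_3-e_2$; a general permutation is then a composition of such maps. You instead treat an arbitrary $\sigma$ in a single step: both vertex sets are sign-symmetric minimal circuits, the reindexed vectors $w_j=y^{\sigma}_{\sigma^{-1}(j)}$ satisfy the same dependence $\sum_j l_j w_j=0$ as the $y_j$, and the linear map determined by $y_j\mapsto w_j$ on the basis $\{y_1,\dots,y_{n-1}\}$ of $V_0$ is forced to send $y_n$ to $w_n$, hence carries vertices to vertices and $KR(L)$ onto $KR(L^\sigma)$. (Invertibility of your $\phi$ uses that $w_1,\dots,w_{n-1}$ are independent, which follows from the minimal-circuit property of $\{y_i^\sigma\}$ --- you invoke this only implicitly, but it is immediate.) Your argument is more uniform and avoids the case analysis, while the paper's approach buys a little more: its isomorphism is a composition of reflections and coordinate permutations, hence an isometry, so the polytopes are actually congruent rather than merely affinely equivalent. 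One shared caveat: like the paper's own proof of Theorem~\ref{thm:KR-Bier}, you use the vertex description $y_i=(e_{i+1}-e_i)/l_i$, i.e.\ $d_{i,i+1}=l_i$, which tacitly assumes no single bar exceeds half the total length; this is inherited from the paper, not a defect peculiar to your argument.
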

\begin{proof}
It is sufficient to prove the statement when  $\sigma$ is a cycle or transposition.
If $\sigma$ is a cycle, the statement is equivalent to relabeling the basis vectors.
Let $\sigma$ be a transposition.
Without loss of generality, let $\sigma =(1,2)$.
Notice that there exists a hyperplane containing $e_4-e_3,\ldots,e_{n}-e_{n-1},e_1-e_n$ which bisects the angle between $e_2-e_1$ and $e_3-e_2$.
The reflection with respect to that hyperplane sends $KR(L)$ to $KR(L^\sigma)$.
\end{proof}

\medskip
\subsection*{Acknowledgements} It is our pleasure to acknowledge valuable remarks and useful suggestions by B. Sturmfels, G.M. Ziegler, S. Vre\' cica, D. Joji\' c, V. Gruji\' c,
  members of Belgrade CGTA-seminar, and the anonymous referees.  The authors were supported by the Grants 174020 and 174034
of the Ministry of Education, Science and Technological Development of
Serbia.

\end{document}